\newtheorem{theorem}{Theorem}[section]
\newtheorem{proposition}[theorem]{Proposition}
\theoremstyle{definition}
\newtheorem{definition}[theorem]{Definition}
\newtheorem{remark}[theorem]{Remark}
\newtheorem{example}[theorem]{Example}
\theoremstyle{remark}
\numberwithin{equation}{section}
\newcommand{\e}{\epsilon}
\newcommand{\mis}{\mathfrak{M}}
\newcommand{\tp}{\otimes}
\newcommand{\htp}{\mathbin{\hat\otimes}}
\newcommand{\C}{\mathbb{C}}
\newcommand{\cL}{\mathcal{L}}
\newcommand{\A}{{\bm A}}
\newcommand{\fA}{\mathfrak{A}}
\renewcommand{\c}{\mathscr{C}}
\DeclareMathAlphabet{\mathbbmsl}{U}{bbm}{m}{sl}
\newcommand{\p}{\mathcal{P}}
\newcommand{\pp}{\mathbbmsl{P}}
\newcommand{\ip}[1]{\langle #1 \rangle}
\newcommand{\N}{\mathbb{N}}
\newcommand{\U}{\mathbf{1}}
\newcommand{\set}[1]{\{#1\}}
\newcommand{\Bigset}[1]{\Bigl\{ #1 \Bigr\}}
\newcommand{\Bigabs}[1]{\Bigl\lvert #1 \Bigr\rvert}
\newcommand{\bigprn}[1]{\bigl( #1 \bigr)}
\newcommand{\Bigprn}[1]{\Bigl( #1 \Bigr)}
\newcommand{\biggprn}[1]{\biggl( #1 \biggr)}
\newcommand{\enorm}{\lVert\,\cdot\,\rVert}
\newcommand{\norm}[1]{\lVert #1 \rVert}
\newcommand{\Bignorm}[1]{\Bigl\lVert #1 \Bigr\rVert}
\newcommand{\lVertt}{\lvert\mspace{-2mu}\lvert\mspace{-2mu}\lvert}
\newcommand{\rVertt}{\rvert\mspace{-2mu}\rvert\mspace{-2mu}\rvert}
\newcommand{\enormm}{\lVertt\, \cdot \,\rVertt}
\newcommand{\normm}[1]{\lVertt #1 \rVertt}
\newcommand{\BiglVertt}{\Bigl\lvert\mspace{-3mu}\Bigl\lvert\mspace{-3mu}\Bigl\lvert}
\newcommand{\BigrVertt}{\Bigr\rvert\mspace{-3mu}\Bigr\rvert\mspace{-3mu}\Bigr\rvert}
\newcommand{\Bignormm}[1]{\BiglVertt #1 \BigrVertt}
\begin{document}

\title{Algebras of Polynomials Generated by Linear Operators}

\author{F. Zaj and M. Abtahi${}^*$}

\address{School of Mathematics and Computer Sciences,
Damghan University, Damghan, P.O.BOX 36715-364, Iran}

\thanks{${}^*$ Corresponding author (abtahi@du.ac.ir)}
\email{abtahi@du.ac.ir (M. Abtahi)}

\subjclass{46G26, 46J10, 46H60, 46J20}

\keywords{Vector-valued uniform algebras, Polynomials on Banach spaces, Nuclear polynomials,
Polynomial convexity, Tensor products}

\maketitle

\begin{abstract}
  Let $E$ be a Banach space and $A$ be a commutative Banach algebra with identity. Let
  $\pp(E, A)$ be the space of $A$-valued polynomials on $E$ generated by
  bounded linear operators (an $n$-homogenous polynomial in $\pp(E,A)$ is
  of the form $P=\sum_{i=1}^\infty T^n_i$, where $T_i:E\to A$
  ($1\leq i <\infty$) are bounded linear operators and
  $\sum_{i=1}^\infty \|T_i\|^n < \infty$). For a compact set $K$ in $E$,
  we let $\pp(K, A)$ be the closure in $\c(K,A)$ of the restrictions $P|_K$
  of polynomials $P$ in $\pp(E,A)$. It is proved that $\pp(K, A)$ is
  an $A$-valued uniform algebra and that, under certain conditions,
  it is isometrically isomorphic to the injective tensor product $\p_N(K)\htp_\e A$,
  where $\p_N(K)$ is the uniform algebra on $K$ generated by nuclear scalar-valued
  polynomials. The character space of $\pp(K, A)$ is then identified with $\hat{K}_N\times \mis(A)$,
  where $\hat K_N$ is the nuclear polynomially convex hull of $K$ in $E$, and
  $\mis(A)$ is the character space of $A$.
\end{abstract}

\section{Introduction}

Let $E$ be a Banach space and $A$ be a commutative unital Banach algebra, over the complex field $\C$.
For a compact set $K$ in $E$, let $\c(K,A)$ be the algebra of all continuous functions
$f:K\to A$ equipped with uniform norm
\begin{equation}\label{eqn:uniform-norm}
  \|f\|_K = \sup\set{\|f(x)\|: x\in K}.
\end{equation}

When $A=\C$, we write $\c(K)$ instead of $\c(K,\C)$.

Given an element $a\in A$, the same notation $a$ is used for the constant
function given by $a(x)=a$, for all $x\in K$, and $A$ is regarded
as a closed subalgebra of $\c(K,A)$. We denote by $\U$ the unit element of $A$,
and identify $\C$ with the closed subalgebra $\C\U=\set{\alpha\U:\alpha\in\C}$ of $A$.
Therefore, every function $f\in\c(K)$ can be seen as the $A$-valued function $x\mapsto f(x)\U$.
We use the same notation $f$ for this $A$-valued function, and regard $\c(K)$
as a closed subalgebra of $\c(K,A)$. By an \emph{$A$-valued uniform algebra on $K$}
we mean a closed subalgebra $\A$ of $\c(K,A)$ that contains the constant functions
and separates points of $K$; see \cite{Abtahi-BJMA,Nikou-Ofarrell}. A comprehensive discussion
on complex function algebras appears in \cite[Chapter 4]{Dales}.

We are mostly interested in those $A$-valued
uniform algebras that are invariant under composition with characters of $A$:
an $A$-valued uniform algebra $\A$ is called \emph{admissible} if
$\phi\circ f \in \A$ whenever $f\in \A$ and $\phi:A\to\C$ is a character of $A$.
Recall that a character is just a nonzero multiplicative linear functional.
Denoted by $\mis(A)$, the set of all characters of $A$, equipped with the Gelfand topology,
is a compact Hausdorff space.

When $\A$ is admissible, we let $\fA$ be the subalgebra of $\A$ consisting of
scalar-valued functions; that is, $\fA=\A\cap\c(K)$. In this case,
$\fA=\set{\phi\circ f : f\in\A}$, for every $\phi\in\mis(A)$.
The algebra $\c(K,A)$ is admissible with $\fA=\c(K)$. It is well-known that
$\c(K,A)$ is isometrically isomorphic to the injective tensor product $\c(K)\htp_\e A$,
and that $\mis(\c(K,A))=K\times \mis(A)$; see \cite{Hausner}. In general, given
an admissible $A$-valued uniform algebra $\A$, it is natural to ask whether
the analogous equalities $\A=\fA\htp_\e A$ and $\mis(\A)=\mis(\fA)\times\mis(A)$ hold.
In this paper, we investigate these questions for a certain $A$-valued uniform algebra
that is generated by linear operators.

\begin{remark}\label{rem:injective-tesor-prod}
  For an admissible $A$-valued uniform algebra $\A$ with $\fA=\c(K)\cap \A$,
  let $\fA A$ denote the subalgebra of $\A$ generated by $\fA\cup A$.
  Indeed, $\fA A$ consists of elements of the form $f=f_1 a_1+\dotsb+f_na_n$,
  where $f_i\in\fA$ and $a_i\in A$, for $1\leq i \leq n$, $n\in\N$. The following statements
  are equivalent.
  \begin{enumerate}
    \item $\A=\fA\htp_\e A$ (isometrically isomorphic).
    \item The subalgebra $\fA A$ is dense in $\A$.
  \end{enumerate}
  In fact, the mapping $\Lambda_0:f\tp a \mapsto fa$ defines a homomorphism
  of $\fA\tp A$ onto $\fA A$, and, using Hahn-Banach theorem, we have
    \begin{align*}
    \Bignorm{\Lambda_0\Bigl(\sum_{i=1}^n f_i\tp a_i\Bigr)}_K
    & = \sup_{x\in K} \Bignorm{\sum_{i=1}^n f_i(x)a_i}
      = \sup_{x\in K} \sup_{\phi\in A^*_1}
        \Bigabs{\sum_{i=1}^n f_i(x)\phi(a_i)} \\
    & = \sup_{\phi\in A^*_1} \Bignorm{\sum_{i=1}^n f_i(\cdot) \phi(a_i)}_K
      = \sup_{\phi\in A^*_1}\sup_{\psi\in\fA^*_1}
        \Bigabs{\psi\Bigl(\sum_{i=1}^n f_i(\cdot)\phi(a_i)\Bigr)} \\
    & = \sup_{\psi\in\fA^*_1}\sup_{\phi\in A^*_1}
        \Bigabs{\sum_{i=1}^n \psi(f_i) \phi(a_i)}
      = \Bignorm{\sum_{i=1}^n f_i\tp a_i}_\epsilon,
  \end{align*}
  where $\fA^*_1$ and $A^*_1$ denote the unit balls of $\fA^*$ and $A^*$, respectively.
  Therefore, $\Lambda_0$ extends to an isometry of $\fA\htp_\e A$ onto $\overline{\fA A}$.
  Given $f\in\fA$ and $a\in A$, we may identify the $A$-valued function $fa$ with
  the elementary tensor $f\tp a$.
\end{remark}

\begin{remark}[\cite{Lourenco-a-class}] \label{rem:E-is-an-algebra}
  It is always possible to define a product $(a,b)\mapsto ab$ on the Banach space
  $E$ in order to make it an algebra with identity. Indeed, take a nonzero functional
  $\psi\in E^*$ and a vector $e \in E$ such that $\psi(e) \neq 0$ and $\|e\| = 1$.
  Then $E=\ker\psi\oplus \C e$. For $a,b\in E$, write $a = x + \psi(a)e$ and $b = y + \psi(b)e$,
  with $x, y \in \ker\psi$, and define $ab = \psi(b)x + \psi(a)y + \psi(a)\psi(b)e$. This makes $E$
  an algebra with identity $e$. Define a norm on $E$ by $\norm{a}_1 = |\psi(a)| + \|x\|$,
  where $a = x + \psi(a)e$. Then, $\enorm_1$ is an equivalent norm on $E$ making it a commutative
  unital Banach algebra. This observation allows us to consider $\c(K,E)$ as a Banach algebra,
  even if the Banach space $E$ is not assumed to be an algebra in the first place.
\end{remark}

In this paper, we consider the space $\pp(E, A)$ of all $A$-valued polynomials
on $E$ generated by bounded linear operators $T:E\to A$. By definition, an $n$-homogenous
polynomial $P$ in $\pp(E,A)$ is of the form $P=\sum_{i=1}^\infty T_i^n$, where
$(T_i)$ is a sequence of bounded linear operators of $E$ into $A$ such that
$\sum_{i=1}^\infty \|T_i\|^n< \infty$. This class of polynomials was introduced
in \cite{Lourenco-a-class} with further study carried out in \cite{Galindo-PGLO}.
For a compact set $K$ in $E$, we let $\pp(K, A)$ be the closure in $\c(K,A)$ of
the restrictions $P|_K$ of polynomials $P\in\pp(E,A)$. It is proved that $\pp(K, A)$ is
an admissible $A$-valued uniform algebra on $K$ with $\fA=\p_N(K)$,
where $\p_N(K)$ represents the (complex) uniform algebra on $K$ generated by nuclear polynomials.
In fact, $f\in\p_N(K)$ if, and only if, there is a sequence $(P_k)$ of nuclear
scalar-valued polynomials on $E$ such that $P_k\to f$ uniformly on $K$.
We prove that the following are equivalent.
\begin{enumerate}[(i)]
 \item $\pp(K,A)=\p_N(K)\htp_\e A$, for all Banach algebra $A$,
 \item $\pp(K,E)=\p_N(K)\htp_\e E$ (see Remark \ref{rem:E-is-an-algebra}),
 \item $I\in \p_N(K)\htp_\e E$, where
$I:E\to E$ is the identity operator.
\end{enumerate}
In this situation, the character space
$\mis(\pp(K, A))$ is identified with $\hat{K}_N\times \mis(A)$,
where $\hat K_N$ denotes the nuclear polynomially convex hull of $K$ in $E$.

\section{Algebras of Polynomials generated by linear operators}

First, let us recall basic definitions, notations and some results of the theory
of polynomials on Banach spaces. For comprehensive texts, see \cite{Dineen}, \cite{Mujica}
and \cite{Prolla}.

Let $E$ and $F$ be  Banach spaces over $\C$. The space of all continuous symmetric
$n$-linear operators $T:E^n\to F$ is denoted by $\cL_s({}^nE,F)$. For $T\in \cL_s({}^nE,F)$,
define $\hat T(x) = T(x,\dotsc,x)$, $x\in E$. A mapping $P :E\to F$ is said to be an $n$-homogeneous
polynomial if $P=\hat T$, for some $T\in \cL_s({}^nE,F)$. For convenience, $0$-homogeneous
polynomials are defined as constant mappings from $E$ into $F$. The vector space of all
$n$-homogeneous polynomials from $E$ into $F$ is denoted by $\p({}^nE,F)$.
The shortened notation $\p({}^nE)$ is used when $F=\C$. A norm on $\p({}^nE,F)$ is defined as
\begin{equation}\label{eqn:norm-of-P}
  \|P\| = \sup\set{\|P(x)\|:\|x\|\leq 1},\ P\in \p({}^nE,F).
\end{equation}

Continuity of $P$ (and of the corresponding symmetric $n$-linear operator $T$) is then equivalent
to finiteness of $\|P\|$. Given an $n$-homogeneous polynomial
$P\in\p({}^nE,F)$, the symmetric $n$-linear mapping
$T$ that gives rise to $P$ can be recovered by any of several polarization formulae;
\begin{equation}\label{eqn:polarization}
  T(x_1,\dotsc,x_n) =
     \frac1{2^n n!} \sum_{\e_j=\pm1}\e_1\dotsm\e_n P\Bigprn{\sum_{j=1}^n \e_j x_j}.
\end{equation}
As a special case, for $a,b\in A$ and $m,n\in \N$, we get
\begin{equation}\label{eqn:polarization-ab}
    a^m b^n = \frac1{2^{m+n}(m+n)!}\sum_{\e_\ell=\pm 1}
     \e_1\dotsb\e_{m+n}\biggprn{\sum_{\ell=1}^m\e_\ell a + \sum_{\ell=m+1}^{m+n}\e_\ell b}^{m+n}.
\end{equation}

Therefore, $n$-homogeneous polynomials and symmetric $n$-linear operators
are in one-to-one correspondence, and the polarization inequality
$\|P\| \leq \|T\| \leq \frac{n^n}{n!}\|P\|$ shows that these spaces are isomorphic;
see \cite[Corollary 1.7 and Proposition 1.8]{Dineen}.

\paragraph{Notation.}
  Let $T\in \cL_s({}^nE,F)$ and $x,y\in E$. For $0 \leq k \leq n$, let
  \begin{equation}\label{eqn:notation-T(x,..,x)}
    T (x^k,y^{n-k}) = T(\underbrace{x,x,\dotsc,x}_{k\text{ times}},\underbrace{y,y,\dotsc,y}_{n-k\text{ times}}).
  \end{equation}

\noindent
Then, by \cite[Theorem 1.8]{Mujica}, we have the \emph{Leibniz formula}
\begin{equation}\label{eqn:Leibniz-formula}
  T\bigprn{(x+y)^n} =
    \sum_{k=1}^n \binom{n}{k} T(x^k,y^{n-k}).
\end{equation}

\subsection{The $A$-valued uniform algebra generated by linear operators}

To continue, we restrict ourselves to Banach algebra valued polynomials.
We let $A$ be a commutative Banach algebra with identity $\U$.
Adopting notations from \cite{Lourenco-a-class}, we denote by $\pp({}^n E,A)$  the space
of $n$-homogeneous $A$-valued polynomials on $E$ of the form
\begin{equation}\label{eqn:dfn-of-P_n}
  P(x)=\sum_{i=1}^\infty T_i(x)^n,\ x\in E,
\end{equation}
where $(T_i)$ is a sequence in $\cL(E, A)$ such that $\sum_{i=1}^\infty \|T_i\|^n <\infty$.
A norm on $\pp({}^nE,A)$ is defined by
\begin{equation}\label{eqn:normm-of-P}
 \normm{P} = \inf \Bigset{\sum_{i=1}^\infty \|T_i\|^n: P=\sum_{i=1}^\infty T_i^n},
\end{equation}
where the infimum is taken over all possible representations of $P$ in \eqref{eqn:dfn-of-P_n}.
It is easy to verify that $\enormm$ is a norm and that $\|P\|\leq \normm{P}$, for all
$P\in\pp({}^nE,A)$. The following shows that convergence with respect to this norm
implies uniform convergence on compact sets.

\begin{proposition}\label{prop:norm-u-leq-M-normm}
  Let $K$ be a compact set in $E$. Then, there is $M>0$ such that
    \begin{equation}\label{eqn:norm-K-and-normm}
       \|P\|_K \leq M^n\normm{P},\ P\in\pp({}^nE,A).
    \end{equation}
    Consequently, the series in \eqref{eqn:dfn-of-P_n} converges uniformly on $K$.
\end{proposition}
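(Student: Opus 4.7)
The plan is to exploit the fact that $K$ is compact (hence bounded in $E$) together with the submultiplicativity of the Banach-algebra norm on $A$, and simply estimate termwise.

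First, since $K$ is compact in $E$, it is bounded, so I set
\[
  M = \sup\set{\|x\| : x \in K} < \infty.
\]
For any bounded linear operator $T: E \to A$ and any $x \in K$, we then have $\|T(x)\| \leq \|T\|\,\|x\| \leq M\|T\|$. Because $A$ is a Banach algebra, its norm is submultiplicative, so $\|T(x)^n\| \leq \|T(x)\|^n \leq M^n \|T\|^n$.

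Next, take a polynomial $P \in \pp({}^nE,A)$ and fix any representation $P = \sum_{i=1}^\infty T_i^n$ with $\sum_i \|T_i\|^n < \infty$. For each $x \in K$, the triangle inequality gives
\[
  \|P(x)\|
    \leq \sum_{i=1}^\infty \|T_i(x)^n\|
    \leq M^n \sum_{i=1}^\infty \|T_i\|^n.
\]
Taking the infimum over all admissible representations yields $\|P(x)\| \leq M^n \normm{P}$, and then taking the supremum over $x \in K$ gives the desired inequality \eqref{eqn:norm-K-and-normm}.

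For the final claim, fix one representation $P = \sum_{i=1}^\infty T_i^n$ and observe that the same termwise estimate shows
\[
  \sup_{x \in K} \Bignorm{\sum_{i=N+1}^\infty T_i(x)^n}
    \leq M^n \sum_{i=N+1}^\infty \|T_i\|^n \xrightarrow[N\to\infty]{} 0,
\]
since $\sum_i \|T_i\|^n < \infty$. Hence the partial sums converge to $P$ uniformly on $K$. There is no real obstacle here; the only thing to keep in mind is that submultiplicativity of $\enorm$ on $A$ is essential to pass from $\|T_i(x)\|$ to $\|T_i(x)^n\|$, which is precisely why the argument requires $A$ to be a Banach algebra rather than merely a Banach space.
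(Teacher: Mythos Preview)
Your proof is correct and follows essentially the same route as the paper's own argument: bound $K$ by $M=\sup_{x\in K}\|x\|$, estimate each term via $\|T_i(x)^n\|\leq M^n\|T_i\|^n$, sum, and then pass to the infimum over representations; the tail estimate for uniform convergence is likewise identical. The only cosmetic difference is that you make the use of submultiplicativity explicit, whereas the paper leaves it implicit in the step $\bignorm{\sum_i T_i(x)^n}\leq \sum_i\|T_i(x)\|^n$.
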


\begin{proof}
  Since $K$ is compact, there is a constant $M$ such that $\|x\|\leq M$,
  for all $x\in K$. Therefore, $\|Tx\| \leq M\|T\|$, for every $x\in K$ and
  $T\in\cL(E,A)$. If a polynomial $P$ is defined by \eqref{eqn:dfn-of-P_n}, then
  \begin{equation*}
    \|P\|_K = \sup_{x\in K}\Bignorm{\sum_{i=1}^\infty T_i(x)^n}
    \leq \sup_{x\in K}\sum_{i=1}^\infty \|T_i(x)\|^n \leq M^n \sum_{i=1}^\infty \|T_i\|^n.
  \end{equation*}

  The above inequality holds for any representation of $P$ in \eqref{eqn:dfn-of-P_n}.
  Taking infimum over all those representations, as in \eqref{eqn:normm-of-P},
  we get $\|P\|_K \leq M^n \normm{P}$. Also, we have
  \begin{multline*}
    \Bignorm{P - \sum_{i=1}^s T_i^n}_K = \Bignorm{\sum_{i=s+1}^\infty T_i^n}_K
       \leq M^n\Bignormm{\sum_{i=s+1}^\infty T_i^n}
       \leq M^n \sum_{i=s+1}^\infty \|T_i\|^n \to0\quad (s\to\infty).
  \end{multline*}

  This shows that the series in \eqref{eqn:dfn-of-P_n} converges uniformly on $K$.
\end{proof}

If $A$ is replaced by $\C$, we reach the nuclear (scalar-valued) polynomials.
By definition, an $n$-homogenous polynomial $P:E\to\C$ is called \emph{nuclear} if it
can be written in a form
\begin{equation}\label{eqn:nuclear-polynomials}
  P(x) = \sum_{i=1}^\infty \psi_i(x)^n,\ x\in E,
\end{equation}
where $(\psi_i)$ is a sequence in $E^*$ with $\sum_{i=1}^\infty \|\psi_i\|^n < \infty$.
We denote by $\p_N({}^n E)$, the space of all $n$-homogenous nuclear (scalar-valued) polynomials
on $E$. In fact, $\p_N({}^nE)=\pp({}^nE,\C)$.  Nuclear polynomials between Banach spaces have been
studied by many authors; see \cite{Carando-extendible}, \cite{Carando-duality},
\cite{Cilia-nuclear} and \cite{Zal-survay}.

We are now in a position to introduce the $A$-valued uniform algebras that these polynomials
generate. We let $\pp(E,A)$ be the space of all polynomials on $E$ of the form
$P=P_0+P_1+\dotsb+P_n$, where $P_k\in \pp({}^kE,A)$, $0\leq k \leq n$, $n\in\N$.
In the same fashion, the space $\p_N(E)$ of all linear combinations of homogenous nuclear
polynomials on $E$ is defined. In fact, $\p_N(E) = \pp(E,\C)$.

\begin{definition}\label{dfn:P(K,A)}
  Let $K$ be a compact set in the Banach space $E$. Define
  \begin{align*}
    \pp_0(K,A)  = \set{P|_K : P \in \pp(E,A)}, \quad
    \p_{N_0}(K) = \set{P|_K : P \in \p_N(E)}.
  \end{align*}
  We define $\pp(K,A)$ as the closure of $\pp_0(K,A)$ in $\c(K, A)$.
  Similarly, $\p_N(K)$ is defined to be the closure of $\p_{N_0}(K)$ in $\c(K)$.
\end{definition}

Therefore, $f\in\pp(K,A)$ (resp.\ $f\in \p_N(K)$) if, and only if, there is a sequence $(P_k)$ of polynomials
in $\pp(E,A)$ (resp.\ $\p_{N}(E)$) such that $P_k\to f$ uniformly on $K$.

Clearly, $\pp(K,A)$ is a closed subspace of $\c(K,A)$. We are aiming to show that
$\pp(K, A)$ is a subalgebra of $\c(K,A)$. To achieve this, one may think of proving
that the space $\pp(E,A)$ itself is an algebra (that is, if $P\in\pp({}^mE,A)$ and
$Q\in\pp({}^nE,A)$ then $PQ\in\pp({}^{m+n}E,A)$).
This manner is naturally expected. However, the authors do not currently have
strong evidence supporting or opposing the possibility that $\pp(E,A)$ is an algebra.
Our approach, therefore, is to give a direct proof of the fact that $\pp(K, A)$ is
an algebra, as follows.

\begin{theorem}\label{thm:P(K,A)-is-an-algebra}
  For a compact set $K$ in $E$, let $\A=\pp(K,A)$.
  Then $\A$ is an admissible $A$-valued uniform algebra on $K$ with $\fA=\p_N(K)$.
\end{theorem}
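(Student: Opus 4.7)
My plan is to verify the four properties required of $\A=\pp(K,A)$: closure under multiplication, presence of constants, separation of points, and admissibility with $\fA=\p_N(K)$. The one genuinely nontrivial step is closure under multiplication, for the reason the authors emphasise: given $P=\sum_iT_i^m\in\pp({}^mE,A)$ and $Q=\sum_jS_j^n\in\pp({}^nE,A)$, the natural double sum $\sum_{i,j}T_i^mS_j^n$ has no obvious summability in $\normm{\,\cdot\,}$, so there is no clear reason for $\pp(E,A)$ itself to be an algebra. The remedy is to argue one level up, inside the uniform closure, leveraging Proposition~\ref{prop:norm-u-leq-M-normm}.

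For the multiplication step I would first use the polarisation identity \eqref{eqn:polarization-ab}, applied pointwise with $a=T(x)$ and $b=S(x)$, to show that any monomial $T^mS^n$ with $T,S\in\cL(E,A)$ already lies in $\pp({}^{m+n}E,A)$: it is a finite $\pm 1$-combination of $(m+n)$-th powers of the operators $\sum_{\ell=1}^m\e_\ell T+\sum_{\ell=m+1}^{m+n}\e_\ell S$. Consequently every finite truncation $P_sQ_t=\sum_{i\le s,\,j\le t}T_i^mS_j^n$ belongs to $\pp(E,A)$. Since Proposition~\ref{prop:norm-u-leq-M-normm} gives $P_s\to P$ and $Q_t\to Q$ uniformly on $K$, it follows that $P_sQ_t\to PQ$ uniformly on $K$, placing $(PQ)|_K\in\pp(K,A)$. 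Decomposing arbitrary polynomials into homogeneous components handles general $P,Q\in\pp(E,A)$, and joint continuity of multiplication on uniformly bounded subsets of $\c(K,A)$ extends the conclusion to all $f,g\in\pp(K,A)$.

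The remaining properties are routine. Constants lie in $\pp(E,A)$ as $0$-homogeneous polynomials. Given distinct $x,y\in K$, Hahn--Banach produces $\psi\in E^*$ with $\psi(x)\ne\psi(y)$, and $z\mapsto\psi(z)\U$ defines an element of $\cL(E,A)$, hence of $\pp({}^1E,A)$, separating $x$ from $y$. For admissibility, fix $\phi\in\mis(A)$; the contractive linear map $\Phi(f)=\phi\circ f:\c(K,A)\to\c(K)$ sends $\sum_iT_i^k\in\pp({}^kE,A)$ to $\sum_i(\phi\circ T_i)^k\in\p_N({}^kE)$, since $\|\phi\circ T_i\|\le\|T_i\|$. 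Continuity then yields $\Phi(\pp(K,A))\subseteq\p_N(K)$, which proves admissibility and the inclusion $\fA\subseteq\p_N(K)$ (applying $\Phi$ to any $f\in\fA$ and using $\phi(\U)=1$). The reverse inclusion is immediate: for $Q=\sum_i\psi_i^k\in\p_N({}^kE)$, the linear operators $\psi_i\U\in\cL(E,A)$ satisfy $(\psi_i\U)^k(x)=\psi_i(x)^k\U$, so $Q\U\in\pp({}^kE,A)$, and passage to uniform limits on $K$ gives $g\U\in\pp(K,A)$ for every $g\in\p_N(K)$.
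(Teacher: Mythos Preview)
Your argument is essentially the paper's, and the parts on constants, separation, admissibility, and the identification $\fA=\p_N(K)$ are all fine. There is, however, one genuine gap in the multiplication step.

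Your polarisation argument shows that $T^mS^n\in\pp({}^{m+n}E,A)$ for $T,S\in\cL(E,A)$ and $m,n\geq 1$, and hence that products of homogeneous polynomials of \emph{positive} degrees land in $\A$. But the sentence ``Decomposing arbitrary polynomials into homogeneous components handles general $P,Q\in\pp(E,A)$'' tacitly assumes you have also dealt with the mixed case where one factor has degree $0$, i.e.\ $b\cdot P$ with $b\in A$ constant and $P\in\pp({}^mE,A)$, $m\geq 1$. The polarisation identity \eqref{eqn:polarization-ab} does not apply here (it needs $m,n\in\N$), and there is no linear operator $S\in\cL(E,A)$ with $S(x)\equiv b$ unless $b=0$. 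So nothing you have written forces $bT^m\in\pp(E,A)$ or even $bP\in\A$.

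The paper treats this as a separate case and uses a non-obvious algebraic trick: for any $b\in A$ and $m\geq 1$ one can write $b=b_1^m+\dotsb+b_m^m$ with explicit $b_k\in A$ built from $m$-th roots of unity (this is where unitality of $A$ is used). Then, by commutativity, $bT_i(x)^m=\sum_{k=1}^m(b_kT_i(x))^m$, and since $x\mapsto b_kT_i(x)$ lies in $\cL(E,A)$ with $\sum_i\|b_kT_i\|^m\leq\|b_k\|^m\sum_i\|T_i\|^m<\infty$, one gets $bP\in\pp({}^mE,A)$ directly. You should insert this step; without it the closure of $\A$ under multiplication is not established.
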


\begin{proof}
  Since $\pp(E,A)$ contains $0$-homogenous polynomials, we see that $\A$
  contains the constant functions, and since $\pp(E,A)$ contains the $1$-homogenous
  polynomials of the form $P=\psi \U$ ($\psi\in E^*$) we see that $\A$ separates points of $K$.
  To prove that $\A$ is an algebra, we show that it is closed under multiplication; that is,
  \begin{equation}\label{eqn:f,g-in-A-then-fg-in-A}
    f,g\in \A \Rightarrow fg\in \A.
  \end{equation}

  Given $f,g\in \A$, there exist sequences $(P_k)$ and $(Q_k)$ of polynomials in $\pp(E, A)$
  such that $P_k\to f$ and $Q_k\to g$, uniformly on $K$, from which we get $P_k Q_k \to fg$,
  uniformly on $K$. Therefore, \eqref{eqn:f,g-in-A-then-fg-in-A} reduces
  to the following;
  \begin{equation}\label{eqn:P,Q-in-P(E,A)-then-PQ-in-A}
    P, Q\in \pp(E, A) \Rightarrow PQ \in \A.
  \end{equation}

  Since every polynomial in $\pp(E,A)$ is a linear combination of homogenous polynomials,
  we may assume that $P\in \pp(^m E, A)$ and $Q\in \pp(^n E, A)$.

  Consider two cases;
  \begin{enumerate}
    \item $m,n\geq 1$,
    \item $m\geq1$ and $n=0$.
  \end{enumerate}

  In case (1), write $P = \sum_{i= 1}^\infty S_i^m$ and $Q = \sum_{j=1}^\infty T_j^n$,
  where $(S_i)$ and $(T_j)$ are sequences of operators in $\cL(E,A)$.
  By Proposition \ref{prop:norm-u-leq-M-normm}, these series converge uniformly on $K$;
  \begin{align*}
    \lim_{s\to\infty} \Bignorm{P- \sum_{i= 1}^s S_i^m}_K =
    \lim_{s\to\infty} \Bignorm{Q- \sum_{j= 1}^s T_j^n}_K  = 0.
  \end{align*}

  Therefore,
  \begin{equation}\label{eqn:PQ-is-approximated}
    \lim_{s\to\infty}\Bignorm{PQ- \sum_{i=1}^s S_i^m \sum_{j=1}^s T_j^n}_K
    = \lim_{s\to\infty}\Bignorm{PQ- \sum_{i=1}^s \sum_{j=1}^s S_i^m T_j^n}_K = 0.
  \end{equation}

  Applying polarization formula \eqref{eqn:polarization-ab}, we have
  \begin{align*}
    S_i^m T_j^n = \frac1{2^{m+n}(m+n)!}\sum_{\e_\ell=\pm 1}
     \e_1\dotsb\e_{m+n}\biggprn{\sum_{\ell=1}^m\e_\ell S_i + \sum_{\ell=m+1}^{m+n}\e_\ell T_j}^{m+n}.
  \end{align*}

  With suitable choice of scalars $\alpha_k$ and operators $U_k$, $1 \leq k \leq 2^{m+n}$,
  we have
  \begin{equation*}
    S_i^m T_j^n = \sum_{k=1}^{2^{m+n}} \alpha_k U_k^{m+n},
  \end{equation*}
  meaning that $S_i^mT_j^n\in \pp({}^{m+n}E,A)$. Now, \eqref{eqn:PQ-is-approximated}
  shows that $PQ$ is approximated (uniformly on $K$) by elements of $\pp({}^{m+n}E,A)$ and thus
  $PQ\in \A$.

  In case (2), write $P = \sum_{i= 1}^\infty S_i^m$ and $Q=b$, where $b\in A$ is a constant.
  Since $A$ is assumed to have an identity $\U$,
  the proof of Proposition 2.1 in \cite{Lourenco-a-class} shows that if
  $\lambda_k = \frac1{m^2} e^{\frac{2\pi ki}m}$ and $a_k = b + e^{\frac{2\pi ki}m}\U$,
  $1\leq k \leq m$, then $b = \lambda_1 a_1^m + \dotsb + \lambda_m a_m^m$.
  Therefore, if
  \begin{equation*}
    b_k = \frac{e^{\frac{2\pi k i}{m^2}}}{\sqrt[m]{m^2}}\Bigprn{b + e^{\frac{2\pi k i}{m}}\U},\
    k=1,2,\dotsc,m,
  \end{equation*}
  then $b = b_1^m + \dotsb + b_m^m$. Now, for every $x\in E$, we have
  \begin{align*}
    (PQ)(x)
        = P(x)b
      & = \sum_{j=1}^\infty \psi_j(x)^m \sum_{k=1}^m b_k^m \\
      & = \sum_{j=1}^\infty\sum_{k=1}^m (\psi_j(x)b_k)^m
        = \sum_{k=1}^m\sum_{j=1}^\infty(\psi_j(x) b_k)^m.
  \end{align*}

  Set $T_{jk} =\psi_j b_k$ and $P_k = \sum_{j=1}^\infty T_{jk}^m$.
  Then $P_k\in \pp({}^mE,A)$ and $PQ = P_1 +\dotsb+P_m$. Therefore, $PQ$
  is an $m$-homogenous polynomial in $\pp(E,A)$. In particular, $PQ\in \A$.

  Finally, we show that $\pp(K,A)$ is admissible; that is,
  \begin{equation}\label{eqn:phi-o-f-in-A}
    f\in\A, \phi\in\mis(A)\Rightarrow \phi\circ f \in \A.
  \end{equation}

  Given $f\in\A$, let $(P_k)$ be a sequence of polynomials in $\pp(E,A)$ such that
  $P_k\to f$ uniformly on $K$. For every $\phi\in \mis(A)$, since $\|\phi\|\leq 1$, we have
  \begin{equation*}
    \|\phi\circ P_k - \phi \circ f\|_K \leq \|P_k-f\|_K,
  \end{equation*}
  and thus $\phi\circ P_k \to \phi\circ f$ uniformly on $K$.
  Therefore, \eqref{eqn:phi-o-f-in-A} reduces to the following;
  \begin{equation}\label{eqn:phi-o-P-in-A}
    P\in \pp(E,A), \phi\in\mis(A)\Rightarrow \phi\circ P \in \A.
  \end{equation}

  Given $P\in \pp(E,A)$, since it is a linear combination of homogenous polynomials,
  we may assume that $P$ itself is an $n$-homogenous polynomial and write
  $P=\sum_{i=1}^\infty T_i^n$, for $T_i\in \cL(E,A)$. Then, for every $x\in E$,
   \begin{align*}
    (\phi \circ P)(x)
      & = \phi\Bigprn{\sum_{i=1}^\infty T_i^n(x)} = \sum_{i=1}^\infty \phi\bigprn{T_i(x)^n} \\
      & = \sum_{i=1}^\infty \phi\bigprn{T_i(x)}^n
        = \sum_{i=1}^\infty\bigprn{\phi \circ T_i(x)}^n.
   \end{align*}

   We see that $\phi\circ P$ is a nuclear polynomial (i.e., $\phi\circ P \in \p_N(E)$).
   Set $\psi_i=\phi\circ T_i$ and $S_i=\psi_i\U$. Then $S_i\in\cL(E,A)$, $\|S_i\| \leq \|T_i\|$,
   and
   \begin{equation*}
     (\phi\circ P)\U = \sum_{i=1}^\infty (\psi_i\U)^n = \sum_{i=1}^\infty S_i^n \in \pp(K,A).
   \end{equation*}
   We conclude that $\pp(K, A)$ is admissible with $\fA=\p_N(K)$.
\end{proof}

\subsection{Representing as tensor product}

We now investigate conditions that imply the equality $\pp(K,A)=\p_N(K)\htp_\e A$.
In the following, $I:E\to E$ is the identity operator.

\begin{theorem}\label{thm:pp(K,A)=pp(K)-tp-A-iff}
  Let $K$ be a compact set in the Banach space $E$.
  The following statements are equivalent.
  \begin{enumerate}[\upshape(i)]
    \item\label{item:for-every-A}
      $\pp(K,A)= \p_N(K)\htp_\e A$, for every unital Banach algebra $A$.

    \item\label{item:just-for-E}
      $\pp(K,E)= \p_N(K)\htp_\e E$.

    \item\label{item:I-in-P-tp-E}
      $I\in\p_N(K)\htp_\e E$.
  \end{enumerate}
\end{theorem}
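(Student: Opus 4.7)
The plan is to prove the chain $(\mathrm{i})\Rightarrow(\mathrm{ii})\Rightarrow(\mathrm{iii})\Rightarrow(\mathrm{i})$. The first implication is immediate: by Remark~\ref{rem:E-is-an-algebra}, $E$ admits a product and an equivalent norm that make it a commutative unital Banach algebra, so specialising (i) to this algebra structure yields (ii). The implication $(\mathrm{ii})\Rightarrow(\mathrm{iii})$ is also essentially free: the identity $I\colon E\to E$ is a bounded linear operator, hence a $1$-homogeneous polynomial in $\pp({}^1E,E)\subset\pp(E,E)$, and consequently $I|_K\in\pp(K,E)=\p_N(K)\htp_\e E$.

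The substantive implication is $(\mathrm{iii})\Rightarrow(\mathrm{i})$. By Theorem~\ref{thm:P(K,A)-is-an-algebra}, $\pp(K,A)$ is an admissible $A$-valued uniform algebra with scalar subalgebra $\p_N(K)$, so Remark~\ref{rem:injective-tesor-prod} reduces the target equality $\pp(K,A)=\p_N(K)\htp_\e A$ to showing that the subalgebra $\p_N(K)\,A$ is dense in $\pp(K,A)$. Since $\pp_0(K,A)$ is dense in $\pp(K,A)$ and every polynomial in $\pp(E,A)$ is a finite sum of homogeneous ones, it suffices to approximate, uniformly on $K$, a generic $n$-homogeneous polynomial $P=\sum_{i=1}^\infty T_i^n$ (with $T_i\in\cL(E,A)$ and $\sum\|T_i\|^n<\infty$) by elements of $\p_N(K)\,A$.

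To exploit (iii), I fix $\e>0$. The same Hahn--Banach calculation as in Remark~\ref{rem:injective-tesor-prod} embeds $\p_N(K)\htp_\e E$ isometrically into $\c(K,E)$, so (iii) furnishes $\psi_1,\dotsc,\psi_m\in\p_N(K)$ and $e_1,\dotsc,e_m\in E$ with
\begin{equation*}
  \sup_{x\in K}\Bignorm{x-\sum_{j=1}^m \psi_j(x)e_j}_E<\e.
\end{equation*}
Setting $S_i(x)=\sum_{j=1}^m\psi_j(x)T_i(e_j)$ and $R_i=T_i-S_i$, linearity yields $\|R_i\|_K<\|T_i\|\e$ and $\|S_i\|_K\leq\|T_i\|(M+\e)$, where $M$ is the constant from Proposition~\ref{prop:norm-u-leq-M-normm}. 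Since $A$ is commutative, I expand
\begin{equation*}
  T_i^n-S_i^n=\sum_{k=1}^n\binom{n}{k}S_i^{n-k}R_i^k,
\end{equation*}
which gives an estimate of the form $\|T_i^n-S_i^n\|_K\leq C_{n,M}\,\|T_i\|^n\,\e$. Summing over $i$, truncating the tail $\sum_{i>N}T_i^n$ via Proposition~\ref{prop:norm-u-leq-M-normm}, and taking $\e$ small, I approximate $P$ uniformly on $K$ by $\sum_{i=1}^N S_i^n$. Each $S_i^n$ expands multinomially into sums of terms $\psi_{j_1}\dotsm\psi_{j_n}\,T_i(e_{j_1})\dotsm T_i(e_{j_n})$; because $\p_N(K)$ is itself an algebra (Theorem~\ref{thm:P(K,A)-is-an-algebra} applied with $A=\C$), the scalar coefficients lie in $\p_N(K)$, and therefore $\sum_{i=1}^N S_i^n\in\p_N(K)\,A$, completing the approximation.

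The principal obstacle is lifting the first-order approximation $T_i\approx S_i$ to an approximation of $n$-th powers while keeping the aggregate error summable in $i$. This is precisely where the commutativity of $A$ becomes essential: it is what permits the binomial expansion above, forcing each error term to be bounded by $\|T_i\|^n\e$ rather than generating uncontrollable cross products; the summability of $\sum\|T_i\|^n$ then absorbs the infinite sum, and the algebra property of $\p_N(K)$ guarantees that the approximants lie in $\p_N(K)\,A$ rather than merely in some larger space.
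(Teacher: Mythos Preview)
Your argument is correct and proceeds along a genuinely different route from the paper's. For the substantive implication $(\mathrm{iii})\Rightarrow(\mathrm{i})$ the paper argues by \emph{induction on the degree}: writing an $n$-homogeneous $P$ as $\hat T$ for a symmetric $n$-linear $T\in\cL_s({}^nE,A)$, it expands $T\bigl((x-\sum f_i(x)a_i)^n\bigr)$ via the Leibniz formula \eqref{eqn:Leibniz-formula}; the top term is $P(x)$ and each lower term is a $\p_N(K)$-combination of polynomials of strictly smaller degree, which fall to the inductive hypothesis. By contrast, you work directly with the defining representation $P=\sum_i T_i^n$, replace each linear factor $T_i$ by the ``finite-rank'' approximant $S_i=\sum_j\psi_j\,T_i(e_j)$, and control $T_i^n-S_i^n$ term-by-term through the commutative binomial expansion; the multinomial expansion of $S_i^n$ then lands you in $\p_N(K)\,A$ in one stroke, without recursion. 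Your approach is more elementary and makes the role of commutativity of $A$ (and of the summability $\sum\|T_i\|^n<\infty$) completely transparent. The paper's approach, on the other hand, never touches the representation $P=\sum_i T_i^n$ and really shows that \emph{every} continuous polynomial $E\to A$ (not only those in $\pp(E,A)$) restricts to an element of $\p_N(K)\htp_\e A$ once $I\in\p_N(K)\htp_\e E$; so it yields the incidentally stronger conclusion $\p(K,A)=\pp(K,A)=\p_N(K)\htp_\e A$ under hypothesis~(iii).
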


\begin{proof}
 The implication \eqref{item:for-every-A} $\Rightarrow$ \eqref{item:just-for-E} is trivial
 (see Remark \ref{rem:E-is-an-algebra}).

 The implication \eqref{item:just-for-E} $\Rightarrow$ \eqref{item:I-in-P-tp-E} is also trivial
 since always $I\in\pp(K,E)$.

 We prove the implication \eqref{item:I-in-P-tp-E} $\Rightarrow$ \eqref{item:for-every-A}.
 Let $A$ be a unital Banach algebra. In view of Remark \ref{rem:injective-tesor-prod}
 and Theorem \ref{thm:P(K,A)-is-an-algebra},
 we always have $\p_N(K)\htp_\e A \subset \pp(K,A)$. To prove the reverse inclusion,
 since $\overline{\pp_0(K,A)}=\pp(K,A)$, we just need  to show that
 $\pp_0(K,A) \subset \p_N(K) \htp_\e A$; this is, $P \in \p_N(K) \htp_\e A$,
 for every polynomial $P\in\pp(E,A)$.

 We argue by induction on $n=\deg(P)$.

 The base case, $n=0$, trivially hold. In fact, if $\deg(P)=0$ then $P$ is a constant polynomial
 and belongs to $\p_N(K)\htp_\e A$.

 For the inductive step, take $n\in\N$ and assume that
 \begin{equation}\label{inductive-step}
   \text{every polynomial $Q$ with $\deg(Q)<n$, belongs to $\p_N(K)\htp_\e A$},
 \end{equation}
 and let $P$ be a polynomial with $\deg(P)=n$. Subtracting from $P$ a polynomial $Q$ with
 $\deg(Q)\leq n-1$, if necessary, we may assume that $P$ is an $n$-homogenous polynomial
 and that $P=\hat T$, for some $T\in \cL_s({}^nE,A)$. Let $\e>0$. By the assumption
 $I\in\p_N(K)\htp_\e E$. Therefore, by Remark \ref{rem:injective-tesor-prod},
 there exist functions $f_1,\dotsc,f_m$ in $\p_N(K)$ and vectors $a_1,\dotsc,a_m$ in $E$ such that
 \begin{equation}\label{eqn:I-app-by-f-tp-a}
   \Bignorm{I(x)-\sum_{i=1}^m f_i(x)a_i} \leq \e,\ x\in K.
 \end{equation}

 Using notation \eqref{eqn:notation-T(x,..,x)}
 and Leibniz formula \eqref{eqn:Leibniz-formula}, we get
 \begin{equation}\label{eqn:T(x-sum Pa)}
   T\Bigprn{\bigprn{x-\sum_{i=1}^m f_i(x)a_i}^n}
       = \sum_{k=0}^n \binom{n}{k} T\Bigprn{x^k, \bigprn{-\sum_{i=1}^mf_i(x)a_i}^{n-k}}.
 \end{equation}

 Defined, for $0 \leq k \leq n-1$ and $x\in K$,
   \begin{equation*}
      g_k(x) = \binom nkT\Bigprn{x^k, \bigprn{-\sum_{i=1}^m f_i(x)a_i}^{n-k}}.
   \end{equation*}

  Indeed,
 \begin{align*}
   g_{n-1}(x) & = -n \sum_{i=1}^m f_i(x) T(x^{n-1},a_i), \\
   g_{n-2}(x) & = n(n-1)\sum_{i,j=1}^m f_i(x) f_j(x) T(x^{n-2},a_i,a_j), \\
              & \ \ \vdots \\
   g_0(x) & = (-1)^n \sum_\alpha \frac {n!}{\alpha!}
                 \prod_{i=1}^m f_i(x)^{\alpha_i} 
                   T(a_1^{\alpha_1},a_2^{\alpha_2},\dotsc,a_m^{\alpha_m}),
 \end{align*}
 where the last summation is taken over all multi-indices $\alpha=(\alpha_1,\dotsc,\alpha_m)$
 in $\N_0^m$ such that $|\alpha|=\alpha_1+\dotsb+\alpha_m=n$.

 We see that each $g_j$, $0 \leq j \leq n-1$, is an algebraic combination
 (multiplication and addition) of functions $f_1,f_2, \dotsc,f_m$ in $\p_N(K)$,
 and some polynomials of degree $j$ which, by assumption \eqref{inductive-step},
 belong to $\p_N(K)\htp_\e A$. Therefore, $g_0,g_1,\dotsc,g_{n-1} \in \p_N(K) \hat \tp_\e A$.

 On the other hand, for every $x\in K$,
 \begin{align*}
   \Bignorm{P(x) + \sum_{j=0}^{n-1} g_j(x)}
   & =  \Bignorm{T(x^n) + \sum_{k=0}^{n-1} \binom{n}{k} T\Bigprn{x^k, \bigprn{-\sum_{i=1}^mf_i(x)a_i}^{n-k}}} \\
    & =  \Bignorm{T\Bigprn{\bigprn{x - \sum_{i=1}^m f_i(x)a_i}^n}} \\
    &  \leq \|T\| \Bignorm{x - \sum_{i=1}^m f_i(x)a_i}^n \leq \|T\|\e^n,
 \end{align*}
 where $\|T\|$ is the operator norm of $T$ in $\cL_s(E,A)$.
 Since $\e$ is arbitrary, this means that $P$ is approximated uniformly on $K$
 by functions in $\p_N(K)\htp_\e A$. Therefore, $P\in\p_N(K)\htp_\e A$ and
 the inductive argument is complete.

 We conclude that $\pp(K,A) = \p_N(K)\htp_\e A$.
\end{proof}

Recall (e.g., \cite[Definition 27.3]{Mujica}) that a Banach space $E$ has the \emph{approximation property}
if, for every $\e>0$ and every compact set $K$ in $E$, there exists a finite rank operator $T:E\to E$
such that $\|T(x)-x\|<\e$, for every $x\in K$.

\begin{proposition}\label{prop:if-E-has-AP}
  Suppose that the Banach space $E$ has the approximation property.
  Then $I\in \p_N(K)\hat \tp_\e E$, for any compact set $K$ in $E$.
\end{proposition}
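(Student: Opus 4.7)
The plan is to combine two observations: first, every finite rank operator on $E$ lies in the algebraic tensor product $\p_N(K) \tp E$ when restricted to $K$; second, the approximation property lets us approximate $I|_K$ uniformly by such operators, and $\p_N(K) \htp_\e E$ is closed in $\c(K, E)$.

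More concretely, I would start by recalling from Remark \ref{rem:injective-tesor-prod} that $\p_N(K) \htp_\e E$ may be identified with the closure in $\c(K, E)$ of the subspace spanned by functions of the form $f \cdot a$, with $f \in \p_N(K)$ and $a \in E$, where $(f \cdot a)(x) = f(x) a$. Thus it suffices to exhibit, for each $\e > 0$, a function of the form $x \mapsto \sum_{i=1}^m f_i(x) a_i$ (with $f_i \in \p_N(K)$ and $a_i \in E$) that approximates the identity to within $\e$ uniformly on $K$.

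Next, I would invoke the approximation property: given $\e > 0$, choose a finite rank operator $S \colon E \to E$ such that $\norm{S(x) - x} < \e$ for all $x \in K$. Any such $S$ can be written as $S(x) = \sum_{i=1}^m \psi_i(x) a_i$ with $\psi_i \in E^*$ and $a_i \in E$. The crucial point, already implicit in the proof of Theorem \ref{thm:P(K,A)-is-an-algebra}, is that every continuous linear functional $\psi \in E^*$ is in particular a $1$-homogeneous nuclear polynomial on $E$ (take the one-term representation in \eqref{eqn:nuclear-polynomials}), so $\psi_i|_K \in \p_{N_0}(K) \subset \p_N(K)$. Consequently $S|_K = \sum_{i=1}^m \psi_i|_K \cdot a_i$ lies in the subalgebra $\p_N(K) \cdot E$ identified with elementary tensors in $\p_N(K) \tp E$.

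Combining these, $\norm{I - S}_K \leq \e$, so $I$ is approximated uniformly on $K$ by elements of $\p_N(K) \tp E$. Since $\e > 0$ was arbitrary and $\p_N(K) \htp_\e E$ is closed in $\c(K, E)$, we conclude $I \in \p_N(K) \htp_\e E$. There is no real obstacle here; the only step worth stating carefully is the identification of $E^*$ as a subspace of $\p_N(E)$, which is immediate from the definition of nuclear polynomials.
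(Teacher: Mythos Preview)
Your proof is correct and follows essentially the same approach as the paper: use the approximation property to approximate $I$ on $K$ by a finite-rank operator $S=\sum_{i=1}^m \psi_i(\cdot)a_i$, observe that each $\psi_i\in E^*\subset \p_N(K)$ so that $S|_K\in\p_N(K)\tp E$, and conclude by closure. The paper's version is terser but the argument is identical.
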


\begin{proof}
  Let $\e>0$. By the approximation property, there is a finite-rank operator
  $T:E\to E$ such that $\|I-T\|_K \leq \e$. The finite-rank operator $T$ can be represented
  in a form
    \begin{equation*}
    T=\psi_1 a_1 + \dotsb + \psi_m a_m,
    \end{equation*}
  where $\psi_i \in E^*$, $a_i\in E$,
  $1 \leq i \leq m$. This means that $T\in \p_N(K) \htp_\e E$. Since $\e$ is arbitrary,
  we conclude that $I\in \p_N(K)\hat \tp_\e E$.
\end{proof}

\newcommand{\cc}{\mathrm{cc}}

For the Banach algebra $A$, let us denote by $A^*_\cc$ the space $A^*$ equipped with the topology of
compact convergence (i.e., the topology of uniform convergence on compact subsets of $A$). Given a set $S$
in $A^*$, we say that \emph{$S$ generates $A^*_\cc$} if $\ip{S}$ is dense in $A^*_\cc$, where $\ip{S}$ denotes
the linear span of $S$ in $A^*$.

\begin{proposition}\label{prop:if-A-has-AP}
  Suppose that the Banach algebra $A$ has the approximation property. If $\mis(A)$ generates $A^*_\cc$,
  then $\pp(K,A)=\p_N(K)\hat \tp_\e A$, for any compact set $K$ in any Banach space $E$.
\end{proposition}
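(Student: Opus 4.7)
The plan is to exploit the admissibility of $\pp(K,A)$ established in Theorem \ref{thm:P(K,A)-is-an-algebra}, together with the two hypotheses on $A$, to approximate any $f\in\pp(K,A)$ uniformly on $K$ by finite sums $\sum_{j=1}^{m}(\phi_j\circ f)\,a_j$ with $\phi_j\in\mis(A)$ and $a_j\in A$. Since admissibility forces each $\phi_j\circ f$ into $\fA=\p_N(K)$, such sums belong to $\p_N(K)\htp_\e A$ via Remark \ref{rem:injective-tesor-prod}, yielding the inclusion $\pp(K,A)\subseteq\p_N(K)\htp_\e A$. The reverse inclusion is automatic, since $\p_N(K)$ and $A$ both embed in $\pp(K,A)$, which is a closed subalgebra of $\c(K,A)$, so $\overline{\p_N(K)A}=\p_N(K)\htp_\e A\subseteq\pp(K,A)$.

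The crux is an approximation lemma internal to $A$: for every compact set $C\subseteq A$ and every $\delta>0$ there exist $\phi_1,\dots,\phi_m\in\mis(A)$ and $a_1,\dots,a_m\in A$ with
\[
  \sup_{a\in C}\Bignorm{a-\sum_{j=1}^{m}\phi_j(a)\,a_j}<\delta.
\]
To prove this, the approximation property of $A$ produces a finite-rank operator $S=\sum_{j=1}^{m}\psi_j(\cdot)\,a_j$ (with $\psi_j\in A^*$, $a_j\in A$) such that $\sup_{a\in C}\norm{a-Sa}<\delta/2$. Since $\mis(A)$ generates $A^*_\cc$, each $\psi_j$ may be replaced by a finite linear combination $\tilde\psi_j=\sum_k\lambda_{jk}\phi_{jk}\in\ip{\mis(A)}$ whose values are within $\delta/(2m(1+\norm{a_j}))$ of those of $\psi_j$ uniformly on $C$; the resulting operator $\tilde S(a)=\sum_j\tilde\psi_j(a)a_j$ then satisfies $\sup_{a\in C}\norm{a-\tilde S a}<\delta$. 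Reindexing the pairs $(\phi_{jk},\lambda_{jk}a_j)$ as new $(\tilde\phi_\ell,\tilde a_\ell)$ puts the approximation in the required form.

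Given $f\in\pp(K,A)$ and $\delta>0$, take $C=f(K)$, which is compact in $A$, and apply the lemma. The function $g=\sum_{j=1}^{m}(\phi_j\circ f)\,a_j$ obeys $\norm{f-g}_K<\delta$, and admissibility places each $\phi_j\circ f$ in $\p_N(K)$; hence $g$ is the image, under the isometry $\Lambda_0$ of Remark \ref{rem:injective-tesor-prod}, of the finite tensor $\sum_j(\phi_j\circ f)\tp a_j\in\p_N(K)\tp A$, so $g\in\p_N(K)\htp_\e A$. Letting $\delta\to 0$ gives $f\in\p_N(K)\htp_\e A$. The principal obstacle is the approximation lemma—specifically, controlling the two successive perturbations (first by a generic finite-rank operator, then by a character combination) in a way that depends on the norms $\norm{a_j}$ produced in the first step; once this is handled, the remainder of the argument is a formal consequence of admissibility and Remark \ref{rem:injective-tesor-prod}.
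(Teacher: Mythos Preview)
Your proof is correct and follows essentially the same approach as the paper: both use the approximation property on the compact set $f(K)$ together with the density of $\ip{\mis(A)}$ in $A^*_\cc$ to approximate $f$ by finite sums in $\p_N(K)A$. The only cosmetic difference is the order of the two steps---the paper first extends the conclusion $\phi\circ f\in\p_N(K)$ from $\phi\in\mis(A)$ to all $\phi\in A^*$ (via the density hypothesis) and then applies the approximation property with arbitrary $\phi_i\in A^*$, whereas you apply the approximation property first and afterwards perturb the resulting functionals to lie in $\ip{\mis(A)}$; the net effect is identical.
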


\begin{proof}
  Let $K$ be a compact set in a Banach space $E$ and take a function $f\in \pp(K,A)$.
  First, we show that $\phi\circ f\in \p_N(K)$, for every $\phi\in A^*$.
  By Theorem \ref{thm:P(K,A)-is-an-algebra}, we have $\phi\circ f \in \p_N(K)$, for
  every $\phi\in \mis(A)$, whence $\phi\circ f \in \p_N(K)$, for every $\phi\in \ip{\mis(A)}$,
  the linear span of $\mis(A)$ in $A^*$. Since $\mis(A)$ generates $A^*_\cc$, given $\phi\in A^*$, there exists
  a net $(\phi_\alpha)$ in $\ip{\mis(A)}$ such that $\phi_\alpha \to \phi$ in the compact
  convergence topology of $A^*$. The set $f(K)$ is compact in $A$, and thus $\phi_\alpha \to \phi$
  uniformly on $f(K)$. This implies that $\phi_\alpha \circ f \to \phi\circ f$ uniformly on $K$.
  Since $\phi_\alpha \circ f \in\p_N(K)$, for all $\alpha$, we get $\phi\circ f\in \p_N(K)$.

  Now, let $\e>0$. Since $A$ has the approximation property and $f(K)$ is compact in $A$,
  there exists a finite rank operator $T=\sum_{i=1}^m \phi_i b_i$, with $\phi_i \in A^*$,
  $b_i \in A$, $1\leq i \leq m$, $m\in \N$, such that
  \begin{equation*}
    \|y-T(y)\| = \Bignorm{y - \sum_{i=1}^m \phi_i(y)b_i} \leq \e,\ y\in f(K).
  \end{equation*}
  Replacing $y$ with $f(x)$, $x\in K$, we get
  \begin{equation*}
    \Bignorm{f(x) - \sum_{i=1}^m \phi_i\circ f(x)b_i} \leq \e,\ x\in K.
  \end{equation*}

  From the first part of the proof, we get $\sum_{i=1}^m(\phi_i\circ f)b_i\in \p_N(K)\htp_\e A$.
  Since $\e>0$ is arbitrary, we have $f\in \p_N(K) \htp_\e A$.
\end{proof}

To support the above result, we present some examples.

\begin{example}
  Suppose that $X$ is a compact Hausdorff space and that $A=\c(X)$. It is well-known that
  $A$ has the approximation property and that $\mis(A)=\set{\delta_x:x\in X}$, where
  $\delta_x:f\mapsto f(x)$ is the point mass measure at $x$. Indeed, $\mis(A)$ coincides
  with the set of extreme points of the unit ball $A_1^*$. By the Krein-Millman theorem,
  $A_1^*$ equals the weak* closed convex hull of $\mis(A)$. Therefore, given
  $\phi\in A_1^*$, there exists a net $(\phi_\alpha)$ in the convex hull of $\mis(A)$
  such that $\phi_\alpha\to \phi$, in the weak* topology. Since $(\phi_\alpha)$ is bounded,
  we get $\phi_\alpha\to \phi$, uniformly on compact sets. This means that $\phi_\alpha\to \phi$ in $A^*_\cc$,
  and thus $\mis(A)$ generates $A^*_\cc$. Now, by Proposition \ref{prop:if-A-has-AP}, we have
  $\pp(K,\c(X))=\p_N(K)\htp_\e \c(X)$, for any compact set $K$ in a Banach space $E$.
  It is worth noting that
  \begin{equation*}
    \p_N(K)\htp_\e \c(X) = \c(X)\htp_\e \p_N(K) = \c(X,\p_N(K)).
  \end{equation*}
\end{example}

\begin{example} \label{exa:lp(S)}
  Let $S$ be any nonempty set, and $A=\ell^p(S)$, for some $p\in (1,\infty)$.
  Then $A^*=\ell^q(S)$ with $1/p+1/q=1$. Define multiplication on $A$ point-wise; that is, $(fg)(s) = f(s) g(s)$, for all
  $s\in S$. It is a matter of calculation to verify that
  \[
    \sum_{s\in S} |f(s)|^p|g(s)|^p \leq \sum_{s\in S} |f(s)|^p \sum_{s\in S} |g(s)|^p
    \quad (f,g\in A).
  \]

  Therefore $(A,\enorm_p)$ is a commutative
  Banach algebra. For every $s\in S$, the evaluation homomorphism $\phi_s:f\mapsto f(s)$ is a character of $A$.
  Conversely, let $\phi:A\to \C$ be a character. Suppose that $\set{\chi_s:s\in S}$ is the collection of all
  the characteristic functions on $S$. Then, given $f\in A$, we have $f = \sum_{s\in S} f(s) \chi_s$,
  and thus $\phi(f) = \sum_{s\in S}f(s)\phi(\chi_s)$. Since $\phi\neq0$,
  there must be a point $s_0\in S$ such that $\phi(\chi_{s_0})\neq 0$. If $s\neq s_0$,
  then $\phi(\chi_s)\phi(\chi_{s_0}) = \phi(\chi_s\chi_{s_0})=0$ so that
  $\phi(\chi_s)=0$. Also $\phi(\chi_{s_0})^2 = \phi(\chi_{s_0})$ and thus $\phi(\chi_{s_0})=1$.
  We therefore have
  \[
    \phi(f) = \sum_{s\in S}f(s)\phi(\chi_s) = f(s_0) = \phi_{s_0}(f).
  \]

  We conclude that $\mis(A)=\set{\phi_s:s\in S}$. The fact that $\ip{\mis(A)}$
  is dense in $A^*=\ell^q(S)$ in the norm topology, yields that $\mis(A)$
  generates $A^*_\cc$. Moreover, if $S$ is countable then $\ell^p(S)$
  has a Schauder basis and thus it has the approximation property.
\end{example}

\subsection{The character space}

Let $K$ be a compact set in the Banach space $E$.
The character space of a function algebra on $K$ generated by a certain class $\p$
of polynomials is closely related to the polynomially convex hull of $K$ with
respect to the given class $\p$.

\begin{definition}
   The \emph{nuclear polynomially convex hull} of $K$ is defined as
   \begin{equation}\label{eqn:nuclear-poly-convex-hull}
       \hat K_N = \set{a\in E: |P(a)| \leq \|P\|_K, P\in \p_N(E)}.
   \end{equation}
   It is said that $K$ is \emph{nuclear polynomially convex} if $K=\hat K_N$.
\end{definition}

\begin{theorem}\label{thm:mis(P-N(K))}
  The character space of $\p_N(K)$ is homeomorphic to $\hat K_N$, and $\p_N(K)$
  is isometrically isomorphic to $\p_N(\hat K_N)$.
\end{theorem}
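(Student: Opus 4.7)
The plan is to construct a homeomorphism $\iota:\hat K_N \to \mis(\p_N(K))$ by point evaluation, and then to deduce the isomorphism $\p_N(K)\cong \p_N(\hat K_N)$ from the identity $\|P\|_K = \|P\|_{\hat K_N}$ for $P\in\p_N(E)$. For $a\in\hat K_N$, I would define $\phi_a$ on the subalgebra of $\p_N(K)$ generated by $E^*|_K$ as polynomial evaluation at $a$. Every finite product $\psi_1\cdots \psi_n$ of linear functionals is nuclear (by polarization it equals a finite sum $\sum\lambda_k L_k^n$ with $L_k\in E^*$), so the defining inequality $|P(a)| \leq \|P\|_K$ of $\hat K_N$ makes $\phi_a$ contractive on this subalgebra, while multiplicativity holds by construction. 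The subalgebra is dense in $\p_N(K)$, since the partial sums $\sum_{i=1}^s \psi_i^n$ approximate $\sum_{i=1}^\infty \psi_i^n$ uniformly on $K$ by Proposition \ref{prop:norm-u-leq-M-normm}, so $\phi_a$ extends uniquely to a character of $\p_N(K)$. The map $\iota(a)=\phi_a$ is injective since $E^*$ separates points of $E$.

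For surjectivity, given $\phi\in\mis(\p_N(K))$, I would consider the bounded linear functional $L:E^*\to \C$, $L(\psi)=\phi(\psi|_K)$, which a priori lies in $E^{**}$ with $|L(\psi)|\leq M\|\psi\|$ for $M=\sup_{x\in K}\|x\|$. The crucial point is that $L$ is actually weak-$*$ continuous, so it is represented by some $a\in E$ under the canonical embedding. By the Banach-Dieudonn\'e theorem it suffices to verify this on the unit ball of $E^*$; there, weak-$*$ convergence coincides with uniform convergence on norm-compact subsets of $E$, so $\psi_\alpha\to\psi$ weak-$*$ implies $\psi_\alpha|_K\to\psi|_K$ in the norm of $\p_N(K)$, and continuity of $\phi$ gives $L(\psi_\alpha)\to L(\psi)$. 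Writing any nuclear polynomial $P=\sum \psi_i^n$, multiplicativity and continuity of $\phi$ then yield
\begin{equation*}
\phi(P|_K) = \sum_{i=1}^\infty \phi(\psi_i|_K)^n = \sum_{i=1}^\infty \psi_i(a)^n = P(a),
\end{equation*}
so $|P(a)|=|\phi(P|_K)|\leq \|P\|_K$ for every $P\in\p_N(E)$, confirming $a\in\hat K_N$ and $\phi=\phi_a$.

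Equipping $\hat K_N$ with the initial topology induced by $\p_N(E)$ (equivalently, the Gelfand topology transported via $\iota$), Tychonoff's theorem shows $\hat K_N$ is compact, and so $\iota$ --- a continuous bijection between compact Hausdorff spaces --- is a homeomorphism. For the second assertion, $K\subset \hat K_N$ gives $\|P\|_K\leq \|P\|_{\hat K_N}$ and the definition of $\hat K_N$ gives the reverse, so the restriction map $P|_{\hat K_N}\mapsto P|_K$ is an isometric algebra isomorphism of the dense subalgebras $\p_{N_0}(\hat K_N)$ and $\p_{N_0}(K)$, extending to $\p_N(\hat K_N)\cong \p_N(K)$. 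The main obstacle I anticipate is the weak-$*$ continuity of $L$, which is what keeps the character space inside $E$ rather than passing to $E^{**}$ and rests on the interaction between weak-$*$ convergence of functionals in $E^*$ and uniform convergence on the norm-compact set $K$.
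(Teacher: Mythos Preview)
Your proposal is correct and follows essentially the same route as the paper: both construct $\phi_a$ by point evaluation, and both handle surjectivity by restricting a character to $E^*$ and invoking the fact that weak$^*$ continuity on bounded sets implies weak$^*$ continuity (you call this Banach--Dieudonn\'e; the paper cites the equivalent result in Horv\'ath), using that bounded weak$^*$-convergent nets converge uniformly on the compact set $K$. Your treatment is in fact slightly more explicit than the paper's on the topology of $\hat K_N$ and on the isometric isomorphism $\p_N(K)\cong\p_N(\hat K_N)$, which the paper leaves largely implicit.
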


\begin{proof}
  Let $a\in \hat K_N$. Given $f\in\p_N(K)$, there is a sequence $(P_k)$ of polynomials
  in $\p_{N_0}(K)$ such that $P_k\to f$, uniformly on $K$, and thus
   \begin{equation*}
    |P_k(a)-P_j(a)| \leq \|P_k-P_j\|_K \to0 \quad (k,j\to\infty).
   \end{equation*}

  This means that $(P_k(a))$ is a Cauchy sequence in $\C$.
  Define $\phi_a(f)=\lim\limits_{k\to\infty}P_k(a)$. If $(Q_k)$ is another sequence of polynomials
  in $\p_{N_0}(K)$ such that $Q_k\to f$, uniformly on $K$, then $|Q_k(a)-P_k(a)| \leq \|Q_k-P_k\|_K\to0$.
  This shows that $\phi_a(f)$ is well-defined. An standard argument shows that
  $\phi_a(f+g)=\phi_a(f)+\phi_a(g)$ and $\phi_a(fg)=\phi_a(f)\phi_a(g)$, for all $f,g\in\p_N(K)$.
  Therefore, $\phi_a\in\mis(\p_N(K))$.

  Conversely, assume that $\phi:\p_N(K)\to \C$ is a character.
  Consider the dual space $E^*$ as a subspace of $\p_N(E)$ consisting of $1$-homogenous polynomials.
  Then, the restriction of $\phi$ to $E^*$ is a linear functional on $E^*$. We show that $\phi$ is
  weak* continuous on norm bounded subsets of $E^*$. Let $(\psi_\alpha)$ be a bounded net in $E^*$
  that converges, in the weak* topology, to some $\psi_0\in E^*$.
  Then $\psi_\alpha\to \psi_0$ uniformly on $K$. Since $\phi$ is continuous
  with respect to $\enorm_K$, we get $\phi(\psi_\alpha)\to \phi(\psi_0)$. This shows that $\phi$
  is weak* continuous on bounded subsets of $E^*$, as desired. By \cite[Corollary 4, page 250]{Horvath},
  $\phi$ is weak* continuous on $E^*$ and thus there is $a\in E$ such that
  $\phi(\psi) = \psi(a)$ for all $\psi\in E^*$. Now, take an $n$-homogenous nuclear polynomial
  $P=\sum_{i=1}^\infty \psi_i^n$, with $\psi_i\in E^*$ and $\sum_{i=1}^\infty\|\psi_i\|^n<\infty$.
  By Proposition \ref{prop:norm-u-leq-M-normm}, the series converges uniformly on $K$, and thus
  \begin{equation*}
    \phi(P) = \phi\Bigprn{\lim_{s\to\infty}\sum_{i=1}^s \psi_i^n}
     = \lim_{s\to\infty}\sum_{i=1}^s \phi(\psi_i)^n
     = \lim_{s\to\infty}\sum_{i=1}^s \psi_i(a)^n = P(a).
  \end{equation*}

  Note that $|P(a)|= |\phi(P)|\leq \|P\|_K$, for every $P\in \p_N(E)$ which shows that $a\in \hat K_N$.
  Thus $\phi = \phi_a$ on $\p_{N_0}(K)$, a dense subspace of $\p_N(K)$, whence $\phi=\phi_a$
  on $\p_N(K)$. Finally, the mapping $\hat K_N\to \mis(\p_N(K))$, $a\mapsto \phi_a$, is
  an embedding of $K$ onto $\mis(\p_N(K))$; see \cite[Chapter 4]{Dales}.
\end{proof}

We conclude this paper with the following result on the character space of $\pp(K,A)$.

\begin{theorem}
  The character space $\mis(\pp(K,A))$ contains $\hat K_N \times\mis(A)$ as a
  closed subset. If either of the conditions in Theorem $\ref{thm:pp(K,A)=pp(K)-tp-A-iff}$,
  Proposition $\ref{prop:if-E-has-AP}$ or Proposition $\ref{prop:if-A-has-AP}$ hold,
  then $\mis(\pp(K,A))=\hat K_N \times\mis(A)$.
\end{theorem}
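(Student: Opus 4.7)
My plan is to construct an explicit injection $\iota\colon \hat K_N \times \mis(A) \hookrightarrow \mis(\pp(K,A))$ by the rule $\iota(a,\phi)=\Phi_{a,\phi}$, where
\[
  \Phi_{a,\phi}(f) := \phi_a(\phi\circ f),\quad f\in\pp(K,A),
\]
and $\phi_a$ is the character of $\p_N(K)$ attached to $a$ by Theorem~\ref{thm:mis(P-N(K))}. The right-hand side is meaningful because Theorem~\ref{thm:P(K,A)-is-an-algebra} shows that $\pp(K,A)$ is admissible with $\fA=\p_N(K)$, so $\phi\circ f\in\p_N(K)$; on a polynomial $P\in\pp(E,A)$ the formula collapses to $\Phi_{a,\phi}(P|_K)=\phi(P(a))$. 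Linearity and multiplicativity of $\Phi_{a,\phi}$ pass through from those of $\phi$ and $\phi_a$, and $\Phi_{a,\phi}(\U)=1$, so $\Phi_{a,\phi}\in\mis(\pp(K,A))$. Injectivity is checked by restriction: on the scalar copy $\p_N(K)\subset\pp(K,A)$ (via $g\mapsto g\U$) the character $\Phi_{a,\phi}$ equals $\phi_a$, which pins down $a$ by Theorem~\ref{thm:mis(P-N(K))}, while on the constant subalgebra $A\subset\pp(K,A)$ it equals $\phi$.

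To see that $\iota$ is a topological embedding with closed image, I would check that $(a,\phi)\mapsto \phi_a(\phi\circ f)$ is continuous for every fixed $f\in\pp(K,A)$, by splitting
\[
  \phi_{a_\alpha}(\phi_\alpha\circ f) - \phi_a(\phi\circ f) = \phi_{a_\alpha}(\phi_\alpha\circ f-\phi\circ f) + (\phi_{a_\alpha}-\phi_a)(\phi\circ f).
\]
The second term vanishes in the Gelfand topology because $\phi\circ f$ is a fixed element of $\p_N(K)$ and $a\mapsto \phi_a$ is Gelfand-continuous by Theorem~\ref{thm:mis(P-N(K))}. For the first, the key point is that the unit ball of $A^*$ is equicontinuous on $A$, so Gelfand convergence $\phi_\alpha\to\phi$ upgrades to uniform convergence on the compact set $f(K)\subset A$; hence $\|\phi_\alpha\circ f-\phi\circ f\|_K\to 0$, and $\|\phi_{a_\alpha}\|\le 1$ controls the first term. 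Since $\hat K_N\times\mis(A)$ is compact and $\mis(\pp(K,A))$ Hausdorff, a continuous injection is automatically a closed embedding. I expect this continuity step to be the principal technical subtlety: Gelfand convergence on $\mis(A)$ is in itself too weak to bound $\|\phi_\alpha\circ f-\phi\circ f\|_K$, and one has to invoke the Arzela-Ascoli-type upgrade to uniform convergence on $f(K)$.

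For the equality $\mis(\pp(K,A))=\hat K_N\times\mis(A)$, I would exploit that each of the three listed hypotheses forces $\pp(K,A)=\p_N(K)\htp_\e A$: directly in Theorem~\ref{thm:pp(K,A)=pp(K)-tp-A-iff}, and via Propositions~\ref{prop:if-E-has-AP} and~\ref{prop:if-A-has-AP} in the remaining cases. By Remark~\ref{rem:injective-tesor-prod}, the subalgebra $\fA A$ of finite sums $\sum f_ia_i$ with $f_i\in\p_N(K)$ and $a_i\in A$ is then dense in $\pp(K,A)$. Given an arbitrary $\Phi\in\mis(\pp(K,A))$, the restrictions $\Phi|_{\p_N(K)}$ and $\Phi|_A$ are nonzero characters (as $\Phi(\U)=1$), hence coincide with some $\phi_a$ and $\phi$ for $(a,\phi)\in\hat K_N\times\mis(A)$. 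Multiplicativity gives $\Phi\bigprn{\sum f_ia_i}=\sum\phi_a(f_i)\phi(a_i)=\Phi_{a,\phi}\bigprn{\sum f_ia_i}$ on $\fA A$, and density forces $\Phi=\Phi_{a,\phi}$. Thus $\iota$ is surjective, completing the identification $\mis(\pp(K,A))=\hat K_N\times\mis(A)$.
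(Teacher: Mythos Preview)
Your argument is correct. The paper's own proof is a single sentence: it invokes Tomiyama's theorem that $\mis(B\htp_\e A)\cong\mis(B)\times\mis(A)$ for commutative unital Banach algebras, together with the identification $\mis(\p_N(K))\cong\hat K_N$ from Theorem~\ref{thm:mis(P-N(K))}. You have instead reproved the relevant instance of Tomiyama's result by hand: your explicit characters $\Phi_{a,\phi}(f)=\phi_a(\phi\circ f)$, the equicontinuity upgrade of Gelfand convergence on $\mis(A)$ to uniform convergence on $f(K)$, and the density argument via $\fA A$ are exactly the ingredients underlying Tomiyama's proof. The two approaches are therefore the same in substance; yours is more self-contained, and it has the advantage of making the first assertion (the closed embedding $\hat K_N\times\mis(A)\hookrightarrow\mis(\pp(K,A))$, which holds \emph{without} the tensor-product hypothesis) fully explicit, whereas the paper's citation of Tomiyama strictly speaking addresses only the equality case.
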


\begin{proof}
  It follows from previous results and a theorem of Tomiyama
  in \cite{Tomiyama}.
\end{proof}


\end{document}